\newtheorem{thm}{Theorem}[section]
\newtheorem{prop}{Proposition}[section]
\newtheorem{lemma}{Lemma}[section]
\newtheorem{rmk}{Remark}[section]
\newcommand{\rmnum}[1]{\romannumeral #1}
\newcommand{\Rmnum}[1]{ \uppercase\expandafter{\romannumeral  #1}}
\newcommand{\refto}[1]{(\ref{eq:#1})}
\numberwithin{equation}{section}
\newcommand{\mysectionname}{}
\newcommand{\newsection}[1]{\section{#1}\renewcommand{\mysectionname}{\uppercase{#1}}}
\begin{document}
\title{On regularity for measures in multiplicative free convolution semigroups}
\author{Ping Zhong}
\address{Department of Mathematics, Rawles Hall, 831 East Third Street, Indiana University, Bloomington, Indiana 47405, U.S.A. }
\email{pzhong@indiana.edu}
\date{}
\begin{abstract}
Given a probability measure $\mu$ on the real line, there exists a semigroup $\mu_t$  with real parameter $t>1$
which interpolates the discrete semigroup of measures $\mu_n$ obtained by iterating its free convolution. It was
shown in \cite{[BB2004]} that it is impossible that $\mu_t$ have no mass in an interval whose endpoints are atoms.
We extend this result to semigroups related to multiplicative free convolution. The proofs use subordination results.
\end{abstract}

\maketitle
\newsection{Introduction}
For any two probability measures $\mu$ and $\nu$ on the real line, we denote their free convolution by $\mu \boxplus \nu$.
In \cite{[BV1995]} Bercovici and Voiculescu observed that for any compactly supported measure
$\mu$ on the real line, one can defined $\mu_t$ for some $t>T$, where $T$ is a positive number which depends on
$\mu$. The result was extended in \cite{[NS1996]} by Nica and Speicher, who constructed a continuous semigroup $\{\mu_t : t > 1\}$ which
interpolates the discrete semigroup $\mu_n$ defined by
\begin{equation}
\mu_n=\mu \boxplus \mu \boxplus \cdots \boxplus \mu, n=1,2,\cdots
\end{equation}
Later, in \cite{[BB2004]} Belinschi and Bercovici used analytic method to give another construction of $\mu_t$, they also studied
regularity of $\mu_t$. In particular,
they showed that, for any $t>1$, it is impossible that $\mu_t$ have no mass
in an interval whose endpoints are atoms (see \cite[Proposition 3.3]{[BB2004]}).

If $\mu$, $\nu$ are probability measures on either the positive real line $\mathbb{R}_+ =[0,\infty)$ or on the unit
circle $\mathbb{T}$, we denote by $\mu \boxtimes \nu$ their free multiplicative convolution. Similar to additive free convolution, in \cite{[BB2005]}
Belinschi and Bercovici constructed a semigroup $\mu_t$ with real parameter $t>1$, for any measure on the
positive real line and for some measures (see Remark \ref{rmk}) on the unit circle. In this article, we use the tools in \cite{[BB2005]}
to extend this regularity result to multiplicative free convolutions. We also follow the notations in that paper.

We remark that the same type of results were proved for free convolution of \, two nontrivial measures in \cite{[BW2008]} by
Bercovici and Wang. For an introduction to free convolutions, we refer to the classic book \cite{[VDN1992]}.

We prove the regularity result for multiplicative free convolution of measures on the positive real line in Section 2,
and the corresponding result for measures on the unit circle in Section 3.


\newsection{Multiplicative Free Convolution on $\mathbb{R}_+$}
Let $\mu$ be a probability measure on the positive real line which is not a point mass at zero. We denote
$\Omega=\mathbb{C} \backslash [0,+\infty)$, and we define

\begin{equation}\label{psimu}
\psi_{\mu}(z)=\int_{\mathbb{R}_+}\frac{zt}{1-zt} \, d\mu (t), \hspace{6pt} z \in \Omega,
\end{equation}
and
\begin{equation}\label{etamu}
\eta_\mu(z)=\frac{\psi_\mu(z)}{1+\psi_\mu(z)},\hspace{6pt}  z \in \Omega.
\end{equation}

From the definition above, if $\mu((a, b))=0$ for two positive numbers $a<b$, then $\psi_\mu$ and $\eta_\mu$
are meromorphic on $\Omega \cup (1/b, 1/a)$ and are real valued on $(1/b, 1/a)$. We have the following result \cite[Proposition 2.2]{[BB2005]}:
\begin{prop}\label{etamu}
Let $\eta$ : $\Omega \rightarrow \mathbb{C}\backslash \{0\}$ be an analytic function such that $\eta(\overline{z})=\overline{\eta(z)}$ for
all $z \in \Omega$. The following two conditions are equivalent.
\begin{enumerate}
 \item There exists a probability measure $\mu \neq \delta_0$ on the $[0, +\infty)$ such that $\eta=\eta_\mu$.
 \item $\eta(0-)=0$ and $\arg\eta(z) \in [\arg z, \pi)$ for all $z \in \mathbb{C}^+$.
\end{enumerate}
\end{prop}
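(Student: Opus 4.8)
The plan is to reduce the statement to the classical Nevanlinna--Pick characterization of Cauchy transforms of measures on $[0,+\infty)$. The bridge is a pair of elementary identities. Writing $\frac{zt}{1-zt}=\frac{1}{1-zt}-1$ and integrating gives $1+\psi_\mu(z)=\int_{\mathbb{R}_+}\frac{d\mu(t)}{1-zt}=\frac{1}{z}G_\mu(1/z)$, where $G_\mu(w)=\int_{\mathbb{R}_+}\frac{d\mu(t)}{w-t}$ is the Cauchy transform of $\mu$. Since $\eta_\mu=1-\frac{1}{1+\psi_\mu}$, this yields the dictionary $\eta_\mu(z)=1-zF_\mu(1/z)$, with $F_\mu=1/G_\mu$ the reciprocal Cauchy (Pick) transform. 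Under the substitution $w=1/z$ this sets up a bijection between the data $(\mu,G_\mu,F_\mu)$ and the function $\eta$, and the two conditions in the Proposition should be exactly the images, under this dictionary, of the statement ``$G_\mu$ is the Cauchy transform of a probability measure carried by $[0,+\infty)$''. I would prove the two implications separately.

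For (1)$\Rightarrow$(2): the normalization $\eta(0-)=0$ is immediate from dominated convergence, since $\psi_\mu(z)\to0$ as $z\uparrow0$. The bound $\arg\eta_\mu(z)<\pi$, i.e. $\eta_\mu(z)\in\mathbb{C}^+$, follows because $\psi_\mu(z)\in\mathbb{C}^+$ (an average of the values $\frac{zt}{1-zt}\in\mathbb{C}^+$) and the M\"obius map $\zeta\mapsto\zeta/(1+\zeta)$ preserves $\mathbb{C}^+$. For the lower bound $\arg\eta_\mu(z)\ge\arg z$ I would use the dictionary: $\eta_\mu(z)/z=1/z-F_\mu(1/z)=w-F_\mu(w)$ with $w=1/z\in\mathbb{C}^-$, so it suffices to show $\mathrm{Im}\,F_\mu(w)\le\mathrm{Im}\,w$ there. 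This is read off the Nevanlinna representation $F_\mu(w)=w+b+\int\frac{1+tw}{t-w}\,d\rho(t)$ (valid because $\mu$ is a probability measure, so $F_\mu(w)/w\to1$), since $\mathrm{Im}(F_\mu(w)-w)=\int\frac{(1+t^2)\,\mathrm{Im}\,w}{|t-w|^2}\,d\rho(t)$ has the sign of $\mathrm{Im}\,w$. Hence $\eta_\mu(z)/z\in\overline{\mathbb{C}^+}$ and $\arg\eta_\mu(z)\in[\arg z,\pi)$.

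For (2)$\Rightarrow$(1), which I expect to be the substantive direction, I would reconstruct the measure. Given $\eta$, set $M(z)=1/(1-\eta(z))$ (well defined since $\mathrm{Im}\,\eta(z)>0$ on $\mathbb{C}^+$ forces $\eta(z)\neq1$, and the reflection symmetry handles $\mathbb{C}^-$), and define $G(w)=\frac{1}{w}M(1/w)=\frac{z}{1-\eta(z)}$ with $z=1/w$. The heart of the argument is to show that $-G$ is a Pick function, i.e. $\mathrm{Im}\,G(w)\le0$ for $w\in\mathbb{C}^+$. Using the reflection symmetry this is equivalent to $\mathrm{Im}\frac{z}{1-\eta(z)}\ge0$ for $z\in\mathbb{C}^+$, and writing $z=x+iy$, $\eta(z)=p+iq$ one computes that this imaginary part has the sign of $y(1-p)+xq$; the hypothesis $\arg\eta(z)\ge\arg z$ is precisely the inequality $xq\ge yp$, which gives $y(1-p)+xq\ge y>0$. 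Thus $-G$ is Pick and admits a Nevanlinna representation. The growth condition $wG(w)\to1$ as $w\to\infty$---which comes directly from $\eta(0-)=0$, since then $1-\eta(1/w)\to1$---forces the linear term in that representation to vanish and identifies $G$ as the Cauchy transform $G_\mu$ of a finite positive measure $\mu$ of total mass $\lim_{w\to\infty}wG(w)=1$. That $\mu$ is carried by $[0,+\infty)$ follows because the reflection symmetry makes $\eta$ real on $(-\infty,0)$, so $G$ extends real-analytically across $(-\infty,0)$ and $\mu$ charges no negative set. Finally $\mu\neq\delta_0$, since $\mu=\delta_0$ would give $\eta\equiv0$, contrary to $\eta:\Omega\to\mathbb{C}\setminus\{0\}$; unwinding the dictionary then gives $\eta=\eta_\mu$.

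The main obstacle is this converse direction, and specifically the step that turns the analytic input of (2) into a genuine probability measure on $[0,+\infty)$: one must verify not only that $-G$ is Pick (a short computation, powered exactly by the inequality $xq\ge yp$), but that the Nevanlinna representing measure is positive, carries no linear part, is normalized to mass one, and is supported on the half-line. Handling the boundary behaviour at $0$ and the absence of mass on $(-\infty,0)$---that is, keeping $1-\eta$ bounded away from zero on the negative axis---is the delicate bookkeeping that the argument condition and the normalization $\eta(0-)=0$ are designed to supply.
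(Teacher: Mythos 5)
First, a point of comparison: the paper never proves this proposition at all — it is quoted as a known result from \cite[Proposition 2.2]{[BB2005]} — so there is no internal proof to measure yours against; what follows judges your argument on its own merits. Your reduction to Pick--Nevanlinna theory via the dictionary $\eta_\mu(z)=1-zF_\mu(1/z)$ is the standard route, your direction (1)$\Rightarrow$(2) is complete and correct, and the key computation in (2)$\Rightarrow$(1) is also right: with $z=x+iy$, $\eta(z)=p+iq$, the imaginary part of $z/(1-\eta(z))$ has the sign of $y(1-p)+xq$, and the hypothesis $\arg\eta(z)\geq\arg z$, i.e. $xq\geq yp$, gives $y(1-p)+xq\geq y>0$. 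That inequality is indeed the heart of the converse.

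However, two steps in the converse are asserted rather than proved, and both need actual arguments. (a) You claim that reflection symmetry alone lets $G$ extend real-analytically across $(-\infty,0)$, so that $\mu$ charges no negative set. But $G(w)=z/(1-\eta(z))$, $z=1/w$, is singular wherever $\eta=1$ on $(-\infty,0)$, and reality of $\eta$ there does not preclude this; if $\eta(x_0)=1$ at some $x_0<0$, the representing measure would pick up mass at a negative point and the support claim would fail. What rules this out is monotonicity: $\eta$ is real on $(-\infty,0)$ and maps $\mathbb{C}^+$ into $\mathbb{C}^+$, so $\eta'\geq 0$ there (the same boundary-derivative argument the paper uses in the proof of Theorem \ref{mainone}); combined with $\eta(0-)=0$ this forces $\eta\leq 0$ on $(-\infty,0)$, hence $1-\eta\geq 1$ there. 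This is precisely where the normalization $\eta(0-)=0$ interacts with the geometry, and your write-up never supplies it. (b) Your normalization step gives $wG(w)\to 1$ only along the negative real axis, since $\eta(0-)=0$ is a limit along $(-\infty,0)$; the standard criterion identifying Cauchy transforms of probability measures requires this limit along $iy$ or in a Stolz angle, so the conclusion "identifies $G$ as the Cauchy transform of a probability measure" does not follow as stated. The repair: once (a) is known, the Nevanlinna measure $\rho$ of the Pick function $\phi=-G$ is supported on $[0,\infty)$, and then the radial limit suffices. Writing $\phi(w)=a+bw+\int\frac{1+tw}{t-w}\,d\rho(t)$ and using $\phi(-s)\to 0$, $s\phi(-s)\to 1$, one gets successively $b=0$, then $a=\int t\,d\rho<\infty$, then $\int(1+t^2)\,d\rho=1$ by monotone convergence, whence $\phi(w)=\int\frac{1+t^2}{t-w}\,d\rho(t)$ and $G=G_\mu$ with $d\mu(t)=(1+t^2)\,d\rho(t)$ a probability measure on $[0,\infty)$. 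With (a) and (b) filled in, your outline becomes a complete proof.
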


The function $\Sigma_\mu(z)=\eta_\mu^{-1}(z)/z$ is well-defined in a neighborhood of some interval $(-\alpha, 0)$.
Given two measures $\mu$ and $\nu$ on the positive real line, a new measure $\mu \boxtimes \nu$ is defined such that
$\Sigma_{\mu \boxtimes \nu}(z)=\Sigma_\mu(z) \Sigma_\nu(z)$. The measure $\mu \boxtimes \nu$ is called
the free multiplicative convolution of $\mu$ and $\nu$. See \cite{[VDN1992]} for details.

One can define a discrete semigroup $\mu_n$ by $\mu_n=\mu^{\boxtimes n}=\mu \boxtimes \mu \boxtimes \cdots \boxtimes \mu$.
The construction of a semigroup $\mu_t$ with real parameter $t > 1$ was given in \cite{[BB2005]}. The following theorem was proven
in that paper.

\begin{thm}\label{eq:thmdef}
Let $\mu \neq \delta_0$ be a probability measure on $[0, +\infty)$, and let $t>1$ be a real number.
\begin{enumerate}[(i)]
 \item There exists a probability measure $\mu_t \neq \delta_0$ on $[0, +\infty)$ such that $\Sigma_{\mu_t}(z)= \Sigma_\mu(z)^t$ for
$z<0$ sufficiently close to zero.
 \item There exists an analytic function $\omega_t : \Omega \rightarrow \Omega$ such that $\omega_t((-\infty, 0)) \subset (-\infty, 0)$,
$\omega_t(0-)=0$, $\arg\omega_t(z) \in [\arg z, \pi)$ for all $z \in \mathbb{C}^+$, and $\eta_{\mu_t}(z)=\eta_\mu(\omega_t(z))$ for all
$z\in \Omega$.
 \item The function $\omega_t$ is given by
 \begin{equation}\label{eq:defofomega}
 \omega_t(z)=\eta_{\mu_t}(z)\left[\frac{z}{\eta_{\mu_t(z)}} \right]^{1/t}, \hspace{6pt}  z \in \Omega,
 \end{equation}
where the power is taken to be positive for $z<0$.
 \item The analytic function $\Phi_t : \Omega \rightarrow \mathbb{C} \backslash \{0\}$ defined by
\begin{equation}\label{eq:defofphi}
\Phi_t(z)=z \left[\frac{z}{\eta_\mu(z)} \right]^{t-1}, \hspace{6pt}  z \in \Omega,
\end{equation}
satisfies $\Phi_t(\omega_t(z))=z$ for $z \in \Omega$.
\end{enumerate}
\end{thm}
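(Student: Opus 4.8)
The plan is to build everything from a single subordination function $\omega_t$ together with the preceding proposition. First I would check that $\Phi_t$ in \refto{defofphi} is a legitimate analytic object: since $\arg\eta_\mu(z)\in[\arg z,\pi)$ on $\mathbb{C}^+$, the quotient $z/\eta_\mu(z)$ has argument in $(-\pi,0]$ there (and in $[0,\pi)$ on $\mathbb{C}^-$ by the reflection $\eta_\mu(\bar z)=\overline{\eta_\mu(z)}$), so the principal branch of $[z/\eta_\mu(z)]^{t-1}$ is well defined and analytic on $\Omega$ and positive on $(-\infty,0)$. Thus $\Phi_t:\Omega\to\mathbb{C}\setminus\{0\}$ is analytic, with $\Phi_t(\bar z)=\overline{\Phi_t(z)}$, $\Phi_t(0-)=0$, and $\Phi_t((-\infty,0))\subset(-\infty,0)$. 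The real content is then to invert $\Phi_t$: if I can produce $\omega_t:\Omega\to\Omega$ with the reflection symmetry, $\omega_t(0-)=0$, $\arg\omega_t(z)\in[\arg z,\pi)$ on $\mathbb{C}^+$, and $\Phi_t(\omega_t(z))=z$, then setting $\eta_{\mu_t}:=\eta_\mu\circ\omega_t$ gives an $\eta$ satisfying condition (2) of the preceding proposition, because $\arg\eta_{\mu_t}(z)=\arg\eta_\mu(\omega_t(z))\ge\arg\omega_t(z)\ge\arg z$ and $\eta_{\mu_t}(0-)=0$. This yields (ii), produces the measure in (i), and makes (iv) simply the identity $\Phi_t\circ\omega_t=\mathrm{id}$.

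To construct $\omega_t$ I would fix $z\in\mathbb{C}^+$ and realize $\omega_t(z)$ as a fixed point of the self-map
\[
S_z(w)=z^{1/t}\,\eta_\mu(w)^{(t-1)/t},
\]
with principal branches; a fixed point $w=S_z(w)$ is exactly a solution of $w^t=z\,\eta_\mu(w)^{t-1}$, that is, of $\Phi_t(w)=z$. The key elementary observation is that $S_z$ maps $\mathbb{C}^+$ into itself: since $\eta_\mu(w)\in\mathbb{C}^+$ one has $\arg S_z(w)=\tfrac1t\arg z+\tfrac{t-1}{t}\arg\eta_\mu(w)$, a convex combination of two numbers in $(0,\pi)$, hence again in $(0,\pi)$. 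Moreover, at any fixed point the inequality $\arg\eta_\mu(w)\ge\arg w$ forces $\arg w\ge\tfrac1t\arg z+\tfrac{t-1}{t}\arg w$, i.e. $\arg\omega_t(z)\ge\arg z$, so the required argument bound comes for free. The lower-half-plane statement and the behaviour on $(-\infty,0)$ then follow by reflection and by taking the positive branches for $z<0$.

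The main obstacle is to show that $S_z$ has an \emph{interior} fixed point, that it is unique, and that it depends analytically on $z$. By Schwarz--Pick, $S_z$ is non-expanding in the hyperbolic metric of $\mathbb{C}^+$ and, not being an automorphism (its image lies in a proper sector), is a strict contraction near any interior fixed point, so uniqueness and $|S_z'(\omega_t(z))|<1$ are automatic once existence is known. For existence I would rule out the Denjoy--Wolff point lying on $\partial\mathbb{C}^+$: the arguments along any orbit are trapped in $[\tfrac1t\arg z,\tfrac1t\arg z+\tfrac{t-1}{t}\pi]$, a compact subinterval of $(0,\pi)$, so the only possible boundary limits are $0$ and $\infty$. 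These are excluded by modulus estimates using the asymptotics $\eta_\mu(w)\sim\big(\int t\,d\mu(t)\big)\,w$ as $w\to0$ and $\eta_\mu(w)\sim w/\!\int t^{-1}\,d\mu(t)$ as $w\to\infty$: because the exponent $(t-1)/t$ is strictly less than $1$, $S_z$ expands moduli near $0$ and contracts them near $\infty$, so orbits remain in a compact subset of $\mathbb{C}^+$ and converge to an interior fixed point. Analyticity in $z$ then follows from the holomorphic implicit function theorem applied to $\Phi_t(w)-z=0$, the needed nonvanishing $\Phi_t'(\omega_t(z))\ne0$ being equivalent to $1-S_z'(\omega_t(z))\ne0$, which holds since $|S_z'(\omega_t(z))|<1$.

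Finally I would read off the remaining formulas. Writing $y=\eta_{\mu_t}(z)=\eta_\mu(\omega_t(z))$ in the fixed-point relation $\omega_t(z)=z^{1/t}\eta_\mu(\omega_t(z))^{(t-1)/t}$ gives $\omega_t(z)=z^{1/t}y^{(t-1)/t}=y\,[z/y]^{1/t}$, which is \refto{defofomega}. For the semigroup identity, inverting $\eta_{\mu_t}=\eta_\mu\circ\omega_t$ yields $\eta_{\mu_t}^{-1}(s)=\Phi_t(\eta_\mu^{-1}(s))$ near $0$, and substituting $\eta_\mu(\eta_\mu^{-1}(s))=s$ into the definition of $\Phi_t$ gives $\eta_{\mu_t}^{-1}(s)=\eta_\mu^{-1}(s)\,[\eta_\mu^{-1}(s)/s]^{t-1}$; dividing by $s$ shows $\Sigma_{\mu_t}(s)=[\eta_\mu^{-1}(s)/s]^{t}=\Sigma_\mu(s)^{t}$, completing (i). The one point demanding care throughout is the consistent choice of branches of the fractional powers, so that all of these identities hold as genuine analytic relations; this bookkeeping, together with the Denjoy--Wolff argument above, is where I expect the real work to lie.
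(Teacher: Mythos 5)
First, a point of reference: the paper you were given does not prove this theorem at all --- it quotes it from \cite{[BB2005]} (``The following theorem was proven in that paper''). So the comparison below is with the proof in that reference. Your overall architecture --- define $\Phi_t$, produce a right inverse $\omega_t$ with the stated symmetry and argument properties, set $\eta_{\mu_t}:=\eta_\mu\circ\omega_t$, invoke Proposition \ref{etamu} to obtain $\mu_t$, and then unwind $\eta_{\mu_t}^{-1}=\Phi_t\circ\eta_\mu^{-1}$ to get $\Sigma_{\mu_t}=\Sigma_\mu^t$ --- is exactly the skeleton of \cite{[BB2005]}. Where you genuinely diverge is the core existence step: there, $\omega_t$ is obtained by showing that $\Phi_t$ restricted to a suitable subdomain of $\mathbb{C}^+$ is univalent and maps it conformally onto $\mathbb{C}^+$ (a boundary-correspondence/argument-principle argument in the spirit of the additive paper \cite{[BB2004]}), whereas you realize $\omega_t(z)$, for each fixed $z$, as the Denjoy--Wolff fixed point of the self-map $S_z(w)=z^{1/t}\eta_\mu(w)^{(t-1)/t}$ of $\mathbb{C}^+$. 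That route is legitimate (it is essentially the method Belinschi and Bercovici themselves adopted in later work on subordination), and it buys uniqueness, $|S_z'(\omega_t(z))|<1$, and hence analyticity via the implicit function theorem almost for free; your identity $\Phi_t'(\omega)=\frac{zt}{\omega}\bigl(1-S_z'(\omega)\bigr)$ is correct, as are the sector-trapping observation and the derivation of $\arg\omega_t(z)\geq\arg z$ from the fixed-point equation.

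The one genuine gap is in the modulus estimates used to exclude $0$ and $\infty$ as Denjoy--Wolff points. You justify them by the asymptotics $\eta_\mu(w)\sim\bigl(\int s\,d\mu(s)\bigr)w$ at $0$ and $\eta_\mu(w)\sim w/\int s^{-1}d\mu(s)$ at $\infty$, but the theorem assumes only $\mu\neq\delta_0$: the moment $\int s\,d\mu(s)$ may be $+\infty$, the moment $\int s^{-1}d\mu(s)$ may be $+\infty$, and if $\mu(\{0\})=\alpha>0$ then $\eta_\mu(w)\to 1-1/\alpha<0$ (a finite negative constant) as $w\to\infty$ in a sector, so $\eta_\mu$ is not asymptotically linear there at all. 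As written, the exclusion step therefore fails precisely for the measures (heavy tails, mass at zero) the theorem is meant to cover. The fix is standard but must be said: near $0$, the Julia--Carath\'{e}odory theorem applied to the self-map $\eta_\mu$ of $\mathbb{C}^+$ at its boundary fixed point $0$ gives $\eta_\mu(w)/w\to\int s\,d\mu(s)\in(0,+\infty]$ nontangentially, so $|\eta_\mu(w)|\geq c|w|$ with $c>0$ in your sector, which is all the expansion argument needs; near $\infty$, the Nevanlinna representation of $\eta_\mu$ gives $\eta_\mu(w)/w\to b\in[0,+\infty)$ nontangentially, so $|\eta_\mu(w)|\leq C|w|$ there (and in the atom-at-zero case $\eta_\mu$ is even bounded), which is all the contraction argument needs, since the exponent $(t-1)/t$ is strictly less than $1$. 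With these replacements your trapping argument closes (e.g.\ $a_{n+1}\leq\frac{t-1}{t}a_n+C$ for $a_n=-\log|S_z^{\circ n}(w_0)|$ cannot tend to $+\infty$). The remaining loose ends you flagged are indeed only bookkeeping: since $\arg\bigl(w/\eta_\mu(w)\bigr)\in(-\pi,0]$ on $\mathbb{C}^+$, all principal branches glue correctly, and the negative half-line and $\omega_t(0-)=0$ are handled by running the same fixed-point argument on $\Omega$ with the argument taken in $(0,2\pi)$, together with the bound $|\omega_t(z)|\leq C^{t-1}|z|$ that follows from the fixed-point equation.
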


The Cauchy transform of $\mu$ is defined by
\begin{equation}
G_\mu(z)=\int\frac{d \mu(t)}{z-t}.
\end{equation}
Given $\alpha \in \mathbb{R}$, then $(z-\alpha)G_\mu(z) \rightarrow \mu(\{\alpha\})$ as $z \rightarrow \alpha$
nontangentially to $\mathbb{R}$ (cf. \cite{[BV1998]}). We say $z \rightarrow \alpha$
nontangentially to $\mathbb{R}$ if $z$ approaches $\alpha$ and $| \Im{z}/ (\Re{z}-\alpha) |$ is bounded from below uniformly.
By definition of $\psi_\mu$ in the equation (\ref{psimu}), we have
$\psi_\mu(z)=-1+1/zG_\mu(1/z)$.
We can thus obtain the atoms of $\mu$ from $\eta_\mu(z)$ by its connection with the Cauchy transform of $\mu$.
A point $x \in (0, +\infty)$ is an atom for $\mu$ if and only if
$\eta_\mu(1/x)=1$ and $\eta_\mu'(1/x)=x/\mu(\{x\})$ is finite, where $\eta_\mu(1/x)$ is the limit of $\eta_\mu(z)$ when $z$ approaches $1/x$
from the upper half plane nontangentially and $\eta_\mu'(1/x)$ is the limit of $(\eta_\mu(z)-\eta_\mu(1/x))/(z-1/x)$
when $z$ approaches $1/x$ nontangentially, which is the
$Julia-Carath\acute{e}odory$ derivative of $\eta_\mu$ at $1/x$.

We need the following lemma which was presented in the proof of \cite[Proposition 5.2]{[BB2005]},
\begin{lemma}\label{eq:extension}
Given a probability measure $\mu$ on the positive real line, and using the notation in \refto{thmdef}, $\eta_{\mu_t}$ extends to
a continuous fucntion $\overline{\mathbb{C}^+} \backslash \{0\} \rightarrow \mathbb{C}$.
In particular, $\eta_{\mu_t}$ takes finite values on the interval $(0, +\infty)$.
\end{lemma}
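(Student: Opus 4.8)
The plan is to reduce the entire statement to the continuity of the subordination function $\omega_t$ up to the boundary, and then read off $\eta_{\mu_t}$ from an algebraic identity that involves $\omega_t$ and $z$ alone. Combining $\Phi_t(\omega_t(z))=z$ from \refto{thmdef} with the formula \refto{defofphi} and the relation $\eta_\mu(\omega_t(z))=\eta_{\mu_t}(z)$, substitution of $w=\omega_t(z)$ yields
\[ z=\omega_t(z)\bigl[\omega_t(z)/\eta_{\mu_t}(z)\bigr]^{t-1}, \]
and solving for $\eta_{\mu_t}$ gives
\[ \eta_{\mu_t}(z)=\omega_t(z)\bigl[\omega_t(z)/z\bigr]^{1/(t-1)}, \]
the branch being fixed by the positivity conventions of \refto{thmdef} (one checks these are consistent on $(-\infty,0)$, where $\omega_t$ is negative and $\omega_t/z>0$). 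The point of this identity is that its right-hand side never involves the composition $\eta_\mu\circ\omega_t$ directly: once $\omega_t$ is known to extend to a continuous map $\overline{\mathbb C^+}\setminus\{0\}\to\mathbb C$ with finite, nonzero boundary values on $(0,+\infty)$, the displayed expression is continuous and finite there, and the lemma follows. Since $(-\infty,0)$ and the open half–plane lie in the interior of $\Omega$, where $\omega_t$ is analytic, only the boundary $(0,+\infty)$ requires attention.

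So everything rests on extending $\omega_t$ continuously to $(0,+\infty)$, and here I would argue by compactness. Fix $x_0\in(0,+\infty)$ and a sequence $z_n\to x_0$ in $\mathbb C^+$, and put $w_n=\omega_t(z_n)\in\mathbb C^+$. A short computation, using $\eta_\mu(w)/w\to\int t\,d\mu$ (possibly $+\infty$) as $w\to0$ and the boundary behaviour of $\eta_\mu$ at infinity, shows $\Phi_t(w)\to0$ as $w\to0$ and $\Phi_t(w)\to\infty$ as $w\to\infty$ inside $\Omega$. Since $\Phi_t(w_n)=z_n\to x_0\in(0,+\infty)$, the points $w_n$ are bounded and bounded away from $0$. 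Hence every subsequence has a further subsequence converging to some $w_\ast\in\overline{\mathbb C^+}\setminus\{0\}$, and by continuity $\Phi_t(w_\ast)=x_0$ whenever $w_\ast\in\mathbb C^+$. A real cluster value at which $\eta_\mu$ blows up is automatically excluded, for there $w/\eta_\mu(w)\to0$ would force $\Phi_t(w)\to0\neq x_0$; this is exactly what makes the finiteness assertion of the lemma come for free.

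It remains to see that all cluster values coincide, which is the crux. The cleanest route is to regard $\omega_t$ as a conformal bijection of $\Omega$ onto $G:=\omega_t(\Omega)$ whose inverse is $\Phi_t$ (the identity $\Phi_t\circ\omega_t=\mathrm{id}$ forces injectivity of $\omega_t$), and then to invoke the Carath\'eodory--Lindel\"of boundary correspondence: a continuous extension of $\omega_t$ to $\overline\Omega$ exists provided $\partial G$ is locally connected. Two cluster values $w_\ast,w_{\ast\ast}$ both map to $x_0$ under the boundary extension of $\Phi_t$, so injectivity there gives $w_\ast=w_{\ast\ast}$, and $\omega_t(z)$ has a genuine limit as $z\to x_0$. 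I would establish the needed boundary regularity of $\Phi_t$ by analysing it on the real axis through $\arg\Phi_t(w)=t\arg w-(t-1)\arg\eta_\mu(w)$, together with the monotone, real behaviour of $\omega_t$ on $(-\infty,0)$ furnished by \refto{thmdef} and the angular bound $\arg\omega_t(z)\in[\arg z,\pi)$.

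The main obstacle is precisely this last step: controlling $\omega_t=\Phi_t^{-1}$ at those boundary points $x_0$ whose preimage $\omega_t(x_0)$ lands on $(0,+\infty)$, where $\eta_\mu$ itself may be singular and where the plain composition $\eta_\mu\circ\omega_t$ is hardest to analyse. The reduction identity in the first paragraph neutralises the finiteness half of the problem, but the existence and uniqueness of the boundary limit of $\omega_t$ must still be extracted from the geometry of the conformal map $\Phi_t$, and I expect that verification of the local connectivity of $\partial G$ (equivalently, the continuous and injective boundary behaviour of $\Phi_t$) to occupy the bulk of the argument.
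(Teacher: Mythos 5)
Your reduction is the right one, and it is in fact the same reduction used in the proof that the paper cites for this lemma (the proof of Proposition 5.2 in \cite{[BB2005]}): from $\Phi_t(\omega_t(z))=z$ one obtains $\eta_{\mu_t}(z)^{t-1}=\omega_t(z)^{t}/z$, so no boundary behaviour of $\eta_\mu$ itself is ever needed once $\omega_t$ is known to extend continuously to $\overline{\mathbb{C}^+}\setminus\{0\}$ with finite values. But your argument for that extension has a genuine gap, which you acknowledge yourself: the uniqueness of the cluster value. The dichotomy you need is that the cluster set $C$ of $\omega_t$ at $x_0\in(0,+\infty)$ is a compact connected set whose intersection with $\Omega$ lies in the discrete fibre $\Phi_t^{-1}(\{x_0\})$, hence either $C$ is a single point of $\Omega$ or $C\subset[0,+\infty]$; the whole difficulty is to exclude a nondegenerate continuum $C$ sitting inside $[0,+\infty]$, and this is precisely the case your argument does not treat. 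Moreover, your appeal to Carath\'eodory is circular as stated: local connectivity of $\partial G$, $G=\omega_t(\Omega)$, is (by Carath\'eodory's theorem itself) equivalent to the continuous extendability of the conformal map $\omega_t$ that you are trying to establish, so invoking it begs the question unless $G$ can be described independently. In the cited source this is not an issue, because the construction of $\omega_t$ comes with an explicit description of the image domain, cut out by an explicit curve determined by $\Phi_t$ (in the additive analogue of \cite{[BB2004]} it is literally the region above the graph of a continuous function), from which the boundary regularity needed for Carath\'eodory is immediate; arguing blind, you have no such description and supply no substitute.

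There are also two smaller soft spots in your compactness step. First, the limits $\Phi_t(w)\to 0$ as $w\to 0$ and $\Phi_t(w)\to\infty$ as $w\to\infty$ are justified via $\eta_\mu(w)/w\to\int t\,d\mu(t)$, which holds nontangentially; but the points $w_n=\omega_t(z_n)$ may approach $[0,+\infty]$ tangentially, since the bound $\arg\omega_t(z_n)\geq\arg z_n$ degenerates as $z_n\to x_0\in(0,+\infty)$, so as written this does not rule out cluster values at $0$ or $\infty$. Second, your exclusion of real cluster values ``at which $\eta_\mu$ blows up'' applies $\Phi_t$ at such a point as if it were continuous there; $\Phi_t$ is only defined on $\Omega$, and $\eta_\mu$ need not have unrestricted limits at points of $(0,+\infty)$, so this step also needs justification (e.g.\ a separate argument that such boundary cluster values force $\Phi_t(w_n)=z_n$ away from $x_0$). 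In summary: the proposal is a correct skeleton of the cited argument, with the correct algebraic identity and the correct identification of where the difficulty lies, but the analytic core --- the proof that the cluster set is a singleton --- is missing.
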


Now we state our theorem, whose proof is standard with the help of the above results.
\begin{thm} \label{mainone}
Consider a probability measure $\mu\neq \delta_0$ on the positive real line, and let $t>1$.
If $\mu_t$ has atoms $a < b$, then we have $\mu_t((a, b)) >0$.
\end{thm}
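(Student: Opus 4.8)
The plan is to argue by contradiction: assume $\mu_t((a,b))=0$ and produce an impossible value of $\eta_\mu$. Set $I=(1/b,1/a)$, which is a nonempty subinterval of $(0,\infty)$ since $0<a<b$. Because $a$ and $b$ are atoms of $\mu_t$, the characterization of atoms recalled above gives the finite boundary values $\eta_{\mu_t}(1/a)=\eta_{\mu_t}(1/b)=1$, these limits existing by Lemma~\ref{eq:extension}. Substituting $\eta_{\mu_t}(1/a)=1$ into formula~\refto{defofomega} yields $\omega_t(1/a)=a^{-1/t}$, and likewise $\omega_t(1/b)=b^{-1/t}$. Since $x\mapsto x^{-1/t}$ is strictly decreasing on $(0,\infty)$, we have $b^{-1/t}<a^{-1/t}$, so $J:=(b^{-1/t},a^{-1/t})$ is a nonempty open interval; I will show each point of $J$ forces a contradiction.

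First I would record the shape of $\eta_{\mu_t}$ on $I$. As $\mu_t((a,b))=0$, $\eta_{\mu_t}$ is meromorphic and real valued on $I$, and the relation between $\psi_{\mu_t}$ and the Cauchy transform gives $\eta_{\mu_t}(z)=1-z/G_{\mu_t}(1/z)$. On $(a,b)$ the function $G_{\mu_t}$ is strictly decreasing, with $G_{\mu_t}(w)\to+\infty$ as $w\to a^+$ and $G_{\mu_t}(w)\to-\infty$ as $w\to b^-$; here it is essential that $\mu_t(\{a\}),\mu_t(\{b\})>0$, i.e.\ that $a,b$ are \emph{genuine} atoms. Hence $G_{\mu_t}$ has a unique zero $w_*\in(a,b)$, and $z_0:=1/w_*\in I$ is the unique pole of $\eta_{\mu_t}$ in $I$. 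Tracking signs, $\eta_{\mu_t}$ rises from $1$ to $+\infty$ on $(1/b,z_0)$, taking values in $(1,\infty)$, while on $(z_0,1/a)$ it runs from $-\infty$ up to $1$; in particular there is $z_1\in(z_0,1/a)$ with $\eta_{\mu_t}(z_1)=0$ and $\eta_{\mu_t}\in(0,1)$ on $(z_1,1/a)$.

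Next I would transport this picture through $\omega_t$. On each of the two subintervals $(1/b,z_0)$ and $(z_1,1/a)$ one has $z>0$ and $\eta_{\mu_t}(z)>0$, so \refto{defofomega} reads $\omega_t(z)=\eta_{\mu_t}(z)^{1-1/t}\,z^{1/t}$ with the \emph{positive} real determination of the powers; this is the determination singled out in Theorem~\ref{eq:thmdef}, as one checks from the bound $\arg\eta_{\mu_t}(z)\in[\arg z,\pi)$, which places $z/\eta_{\mu_t}(z)$ off the branch cut. Thus $\omega_t$ is real-analytic and positive on both subintervals. Using $\omega_t(1/b^+)=b^{-1/t}$, $\omega_t(z_0^-)=+\infty$, $\omega_t(z_1^+)=0$ and $\omega_t((1/a)^-)=a^{-1/t}$ together with the intermediate value theorem, $\omega_t\big((1/b,z_0)\big)\supseteq(b^{-1/t},+\infty)$ and $\omega_t\big((z_1,1/a)\big)\supseteq(0,a^{-1/t})$, so both images contain $J$.

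The contradiction, which is the crux, comes from evaluating $\eta_\mu$ on $J$. Fix $w\in J$ that is a regular value of $\omega_t$ on each subinterval, and pick $z\in(1/b,z_0)$ and $z'\in(z_1,1/a)$ with $\omega_t(z)=\omega_t(z')=w$. Near $z$, the identity $\eta_{\mu_t}=\eta_\mu\circ\omega_t$ and the real-analytic local invertibility of $\omega_t$ show that $\eta_\mu$ continues across $w$ with $\eta_\mu(w)=\eta_{\mu_t}(z)>1$; the same argument at $z'$ gives $\eta_\mu(w)=\eta_{\mu_t}(z')<1$. Since $\eta_\mu$ is a single function near $w$, its value there cannot be both $>1$ and $<1$, and this contradiction yields $\mu_t((a,b))>0$. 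The step I expect to require the most care is precisely this matching: one must be certain that both subintervals compute the \emph{same} boundary datum of $\eta_\mu$ at $w$. This is guaranteed because $\omega_t$ maps $\mathbb{C}^+$ into $\mathbb{C}^+$ and is univalent there (it has the left inverse $\Phi_t$ of \refto{defofphi}, by Theorem~\ref{eq:thmdef}(iv)), so the two approaches to $w$ both take place from within the upper half-plane and detect one and the same nontangential limit of $\eta_\mu$.
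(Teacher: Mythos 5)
Your argument for the case $0<a<b$ is essentially sound, but the proposal has a genuine gap: you assume $0<a<b$ at the outset, whereas the theorem concerns measures on $\mathbb{R}_+=[0,\infty)$ and the smaller atom may sit at the origin; the paper treats $a=0$ as a separate case. The omission is not cosmetic, because your mechanism breaks down when $a=0$. The gap interval becomes $(1/b,+\infty)$, and past the pole $z_0$ one computes $\eta_{\mu_t}(z)=1-1/\bigl(wG_{\mu_t}(w)\bigr)$ with $w=1/z\in(0,w_*)$, where now $wG_{\mu_t}(w)<\mu_t(\{0\})<1$; hence $\eta_{\mu_t}<0$ on all of $(z_0,+\infty)$, with limit $1-1/\mu_t(\{0\})<0$ at infinity. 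So the point $z_1$ with $\eta_{\mu_t}(z_1)=0$ and the interval on which $\eta_{\mu_t}\in(0,1)$ never appear. Worse, where $\eta_{\mu_t}<0$ the quantity $z/\eta_{\mu_t}(z)$ is negative, i.e.\ on the branch cut, so the boundary values of $\bigl[z/\eta_{\mu_t}(z)\bigr]^{1/t}$ from $\mathbb{C}^+$ are genuinely complex and $\omega_t$ is not real there: your two-sided matching of $\eta_\mu$ along $J$ has no counterpart. For $a=0$ one needs a different ending, e.g.\ the paper's: $\eta_{\mu_t}$ is increasing and, by Lemma \ref{eq:extension}, finite on $(1/b,+\infty)$, equals $1$ at $1/b$, yet tends to $1-1/\mu_t(\{0\})<1$ at $+\infty$ --- a contradiction.

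For $a>0$ your route is correct but far more roundabout than necessary, in an instructive way. Your Cauchy-transform analysis shows that, under the contradiction hypothesis, $G_{\mu_t}$ vanishes at some $w_*\in(a,b)$, so $\eta_{\mu_t}$ has a pole at $z_0=1/w_*\in(0,+\infty)$. But Lemma \ref{eq:extension} --- which you already invoke for boundary values --- states that $\eta_{\mu_t}$ takes finite values on $(0,+\infty)$, so the proof is over at that point: the branch tracking, the intermediate value theorem, and the matching of the two local continuations of $\eta_\mu$ across $J$ are not needed. (That matching is nevertheless valid: since $\omega_t$ is real with nonvanishing derivative at the regular preimages and maps $\mathbb{C}^+$ into $\mathbb{C}^+$, both continuations determine the same nontangential limit of $\eta_\mu$ at $w$, forcing $\eta_{\mu_t}(z)=\eta_{\mu_t}(z')$, which is absurd.) The paper's own proof avoids poles altogether: it shows $\eta_{\mu_t}'>0$ wherever $\eta_{\mu_t}$ is real on $(1/b,1/a)$, using $\eta_{\mu_t}(\mathbb{C}^+)\subset\mathbb{C}^+$ and reflection; then Lemma \ref{eq:extension} makes $\eta_{\mu_t}$ finite, hence strictly increasing on the whole interval with value $1$ at both endpoints, which forces $\eta_{\mu_t}\equiv 1$ and contradicts strict monotonicity. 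Your subordination argument does have one merit: it uses Lemma \ref{eq:extension} only for the existence of boundary values (which the atom characterization supplies anyway), not for finiteness; but as written it proves less than the theorem claims.
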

\begin{proof}
We argue by contradiction. Suppose $\mu_t(\{a\})>0$,
$\mu_t(\{b\})>0$ and $\mu_t((a, b))=0$.
Let us first assume that $a>0$, in this case
$\eta_{\mu_t}(1/a)=\eta_{\mu_t}(1/b)=1$ and
$\eta_{\mu_t}$ is defined on $(1/b, 1/a)$, analytic and taking real
values. Notice that $\eta_{\mu_t}(\mathbb{C}^+) \subset
\mathbb{C}^+$, and $\eta_{\mu_t}(\overline{z}) =
\overline{\eta_{\mu_t(z)}}$, we claim that $\eta_{\mu_t}'(z)>0$ for
$z \in (1/b, 1/a)$. Indeed, for any $z \in (1/b, 1/a)$, $\eta_{\mu_t}(z)$ is real, thus
\begin{align}\eta_{\mu_t}'(z)&=\lim_{y \rightarrow 0}\frac{\eta_{\mu_t}(z+iy)-\eta_{\mu_t}(z)}{iy} \nonumber \\
                &=\lim_{y \rightarrow 0} \Re\{ \frac{\eta_{\mu_t}(z+iy)-\eta_{\mu_t}(z)}{iy}  \} \nonumber  \\
                &=\lim_{y \rightarrow 0} \frac{ \Im \eta_{\mu_t}(z+iy)}{y} \geq 0. \nonumber
\end{align}
If $\eta_{\mu_t}'(z_0)=0$ for some $z_0 \in (1/b, 1/a)$,
then the image under $\eta_{\mu_t}$
of a small disk $\{w : \Im w>0, |w-z|< \epsilon \}$ contains numbers in $\mathbb{C}^-$.
Thus, $\eta_{\mu_t}$ is increasing on $(1/b,
1/a)$. By Lemma \refto{extension}, $\eta_{\mu_t}$ can not be infinite, we have
$\eta_{\mu_t}\equiv 1$ on $(1/b, 1/a)$. This contradicts to
$\eta_{\mu_t}'(z)>0$ on $(1/b, 1/a)$.

Next we consider the case that $a=0$. In this case, $\eta_{\mu_t}(1/b)=1$ and $\eta_{\mu_t}$ is defined on
$(1/b, +\infty)$, analytic and taking real values. We also have $\eta_{\mu_t}'(z)>0$ for $z \in (1/b, +\infty)$.
By definition, we can calculate that $\lim_{z \rightarrow +\infty}\eta_{\mu_t}(z)=1-1/\mu_t(\{0\})<1$. This also contradicts to
the fact that $\eta_{\mu_t}$ is increasing and can not be infinite.
\end{proof}

\section{Free Multiplicative Convolution on $\mathbb{T}$}
Now we consider measures on the unit circle. We denote $\mathbb{D}=\{z : |z|<1\}$ and $\mathbb{T}=\{e^{it} | t\in [0, 2\pi) \}$.
We can now define the function $\psi_\mu$ and $\eta_\mu$
on the unit disk. Observe that
\begin{align}\label{psiMu}
\psi_\mu(z) &=\int_{\mathbb{T}}\frac{zt}{1-zt} \, d\mu(t) \nonumber \\
        &=\int_0^{2\pi} \frac{z}{e^{it}-z} \, d\mu(e^{-it}) \nonumber \\
        &=-\frac{1}{2}+\frac{1}{2} \int_0^{2\pi} \frac{e^{it}+z}{e^{it}-z} \,
        d\mu(e^{-it}), \hspace{6pt}  z\in \mathbb{D}.
\end{align}
Thus $\psi_\mu : \mathbb{D} \rightarrow \mathbb{C}$ is an analytic
function, and $\psi(0)=0$, $\Re\psi(z) \geq -\frac{1}{2}$
for all $z \in \mathbb{D}$.

Let us denote
$\eta_\mu=\psi_\mu/(1+\psi_\mu).$
It follows from the above obeservation that $\eta_\mu : \mathbb{D}  \rightarrow \mathbb{D}$,
$\eta_\mu(0)=0$ and $|\eta_\mu(z)| \leq |z|$. And it is well known that any analytic
function $\eta : \mathbb{D} \rightarrow \mathbb{C}$ such that
$|\eta_\mu(z)| \leq |z|$ for all $z \in \mathbb{D}$ is of the form $\eta_\mu$ for some probability measure $\mu$ on $\mathbb{T}$.

Suppose $\eta_\mu'(0)=\psi_\mu'(0)=\int_0^{2\pi}e^{it}\,
d\mu(e^{it}) \neq 0$, so that the inverse $\eta_\mu^{-1}$ is defined in a
neighborhood of zero. We denote
$\Sigma_\mu(z)=\eta_\mu^{-1}(z)/z$. Given two probability
measures $\mu$ and $\nu$ on $\mathbb{T}$, their free multiplicative
convolution, which is denoted by $\mu \boxtimes \nu$, is
characterized as in the case of measures on $[0, +\infty)$ by $\Sigma_{\mu \boxtimes
\nu}=\Sigma_\mu \Sigma_\nu$ in a neighborhood of zero.

Given $\delta_a$ for some $a\in \mathbb{T}$, one can easily check that $\Sigma_{\delta_a}(z)=1/a$. $\mu \boxtimes \delta_a$
is a probability measure on $\mathbb{T}$ such that $\mu \boxtimes \delta_a (at)=\mu(t)$ for $t \in \mathbb{T}$, i.e.
$\mu \boxtimes \delta_a $ can be obtained by rotating $\mu$ by $\arg a$.

The following theorem was proved in \cite{[BB2005]} (Theorem 3.5, Theorem 4.4 and Proposition 5.3).
\begin{thm}\label{eq:thmmulti}
Given a probability measure $\mu$ on $\mathbb{T}$ such that
$\int_{\mathbb{T}} \zeta \, d\mu(\zeta) \neq 0$, and the function
$\eta_\mu$ never vanishes on $\mathbb{D}\backslash \{0\}$. Let $t>1$
be a real number.
\begin{enumerate}[(i)]
 \item There exists a probability measure $\mu_t$ on $\mathbb{T}$ such
that $\int_{\mathbb{T}} \zeta \, d\mu_t(\zeta) \neq 0$, and
$\Sigma_{\mu_t}(z)=\Sigma_\mu(z)^t$ in a neighborhood of zero.
Moreover, $\eta_{\mu_t}$ never vanishes on $\mathbb{D}\backslash
\{0\}$.
 \item There exists an analytic function $\omega_t: \mathbb{D}
\rightarrow \mathbb{D}$ such that $|\omega_t(z)| \leq |z|$ and
$\eta_{\mu_t}(z)=\eta_\mu(\omega_t(z))$ for $z \in \mathbb{D}$.
 \item \label{omega} The function $\omega_t$ can be calculated as
$\omega_t(z)=\eta_{\mu_t}(z) \left[z/\eta_{\mu_t}(z)
\right]^{1/t}$, $z \in \mathbb{D}$.
 \item $\omega_t$ and $\eta_{\mu_t}$ can be extended as continuous functions from
$\overline{\mathbb{D}}$ to $\overline{\mathbb{D}}$. Moreover,
$\omega_t$ is one to one.
 \item If a point $\zeta \in \mathbb{T}$ and $\eta_{\mu_t}(\zeta)=1$, then
there is a real number $\theta$ such that $\zeta=e^{i\theta}$ and $\omega_t(\zeta)=e^{i\theta/t}$,
i.e. $\omega_t(\zeta)$ is one of the ($1/t$)-powers of $\zeta$.
\end{enumerate}
\end{thm}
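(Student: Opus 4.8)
The plan is to transport the construction recorded for the half-line in Theorem~\ref{eq:thmdef} to the disk, the slit plane $\Omega$ being replaced by $\mathbb{D}$ and the interval $(-\infty,0)$ by a neighborhood of $0$. The two standing hypotheses are exactly what is needed: $\int_{\mathbb{T}}\zeta\,d\mu(\zeta)=\eta_\mu'(0)\neq 0$ makes $0$ a simple zero of $\eta_\mu$, and $\eta_\mu\neq 0$ on $\mathbb{D}\setminus\{0\}$ then makes
\[
u(z):=\frac{z}{\eta_\mu(z)}
\]
analytic and zero-free on all of $\mathbb{D}$, with $u(0)=1/\eta_\mu'(0)$ and $|u(z)|\geq 1$ since $|\eta_\mu(z)|\leq|z|$. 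As $\mathbb{D}$ is simply connected I may fix a branch of $\log u$ and set
\[
\Phi_t(z)=z\,u(z)^{t-1}=z\,[z/\eta_\mu(z)]^{t-1},\qquad z\in\mathbb{D},
\]
the circle analogue of (\ref{eq:defofphi}); the entire theorem is organized around inverting $\Phi_t$ and reading off $\omega_t$ together with $\eta_{\mu_t}=\eta_\mu\circ\omega_t$.

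For parts (i)--(iii) I would first build $\omega_t$ as the inverse of $\Phi_t$. Near $0$ this is immediate, since $\Phi_t(0)=0$ and $\Phi_t'(0)=u(0)^{t-1}\neq 0$; the content is that this local inverse continues to a single-valued analytic map $\omega_t:\mathbb{D}\to\mathbb{D}$. The estimate $|\Phi_t(w)|=|w|\,|u(w)|^{t-1}\geq|w|$ is the driving force: upon inversion it yields the contraction $|\omega_t(z)|\leq|z|$, which keeps the continuation inside $\mathbb{D}$ and bounded away from $\mathbb{T}$. Granting $\omega_t$, part (ii) is this contraction estimate; $\eta_{\mu_t}:=\eta_\mu\circ\omega_t$ then satisfies $|\eta_{\mu_t}(z)|\leq|\omega_t(z)|\leq|z|$, so by the quoted characterization of $\eta$-transforms it equals the $\eta$-function of a genuine probability measure $\mu_t$ on $\mathbb{T}$, and $\eta_{\mu_t}$ is zero-free on $\mathbb{D}\setminus\{0\}$ because $\omega_t$ and $\eta_\mu$ are. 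Finally the defining identity $\Phi_t(\omega_t(z))=z$ unwinds exactly as in the half-line case: writing $w=\omega_t(z)$ and using $\eta_\mu(w)=\eta_{\mu_t}(z)$, the relation $z=\Phi_t(w)=w[w/\eta_{\mu_t}(z)]^{t-1}=\eta_{\mu_t}(z)[w/\eta_{\mu_t}(z)]^{t}$ gives $[z/\eta_{\mu_t}(z)]^{1/t}=w/\eta_{\mu_t}(z)$, i.e. the formula $\omega_t(z)=\eta_{\mu_t}(z)[z/\eta_{\mu_t}(z)]^{1/t}$ of part (iii); passing to $\eta^{-1}$ near $0$ converts $\Phi_t(\omega_t(z))=z$ into $\Sigma_{\mu_t}=\Sigma_\mu^{\,t}$, which is part (i).

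The main obstacle is precisely the global, single-valued existence of $\omega_t$ together with the boundary regularity of part (iv). For single-valuedness one must rule out that $\Phi_t$ has critical values inside $\mathbb{D}$, so that $\Phi_t^{-1}$ may be continued along every path of the simply connected target $\mathbb{D}$ and the monodromy theorem applies; I expect this, rather than the algebra above, to absorb most of the effort. Interior injectivity of $\omega_t$ is then free, since $\Phi_t\circ\omega_t=\mathrm{id}$ forces $\omega_t$ to be one-to-one. For the continuous extension to $\overline{\mathbb{D}}$ in part (iv) I would like to invoke a Carath\'eodory-type boundary correspondence for the conformal map $\omega_t$, the complication being that $\eta_\mu$ and $u$ need not extend continuously to all of $\mathbb{T}$; the saving feature is that $t>1$ makes the exponent $1/t$ in part (iii) regularizing, while the contraction $|\omega_t(z)|\leq|z|$ controls the approach to $\mathbb{T}$. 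Once $\omega_t$ extends continuously and injectively, $\eta_{\mu_t}=\eta_\mu\circ\omega_t$ inherits continuity, completing (iv).

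Part (v) is then short. If $\zeta\in\mathbb{T}$ satisfies $\eta_{\mu_t}(\zeta)=1$, then $\eta_\mu(\omega_t(\zeta))=1$; since $|\eta_\mu(w)|\leq|w|\leq 1$ can equal $1$ only when $|w|=1$, the point $\omega_t(\zeta)$ lies on $\mathbb{T}$. Evaluating the formula of part (iii) on the boundary, which is legitimate by the continuity established in (iv), and inserting $\eta_{\mu_t}(\zeta)=1$ collapses it to $\omega_t(\zeta)=\zeta^{1/t}$. Writing $\zeta=e^{i\theta}$ gives $\omega_t(\zeta)=e^{i\theta/t}$, one of the $(1/t)$-th powers of $\zeta$, as asserted.
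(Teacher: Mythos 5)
Your proposal should first be measured against the right target: this paper does not prove the theorem at all, it quotes it from \cite{[BB2005]} (Theorem 3.5, Theorem 4.4 and Proposition 5.3 there), and your skeleton does mirror that construction. The algebra you carry out is correct as far as it goes: $u=z/\eta_\mu$ is analytic, zero-free, of modulus at least $1$; the coercivity $|\Phi_t(w)|\geq |w|$ does make $\Phi_t$ proper over $\mathbb{D}$ (every solution of $\Phi_t(w)=z$ lies in $\{|w|\leq |z|\}$, so lifted paths cannot escape); and, granted a global analytic right inverse $\omega_t$, your derivations of (i), (iii) and (v) are sound. The first genuine gap is the one you flag yourself and then never close: to run the monodromy/covering argument you must show that $\Phi_t$ has no critical points in the component of $\Phi_t^{-1}(\mathbb{D})$ containing $0$. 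Nothing you use --- $|u|\geq 1$, $u$ zero-free, $\Phi_t'(0)\neq 0$ --- excludes a zero of $\Phi_t'/\Phi_t=1/w+(t-1)\,u'(w)/u(w)$ in that component, and this univalence assertion is precisely the hard core of Theorem 4.4 of \cite{[BB2005]}, established there through a detailed construction of the region $\omega_t(\mathbb{D})$ and analysis of $\Phi_t$ on its boundary, not by a soft argument. Saying you ``expect this to absorb most of the effort'' is accurate, but it leaves (i)--(iii) unproven.

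The second gap is part (iv). Carath\'eodory's boundary correspondence applies to the conformal map $\omega_t:\mathbb{D}\to\omega_t(\mathbb{D})$ only when $\omega_t(\mathbb{D})$ has locally connected boundary (a Jordan curve, if you also want injectivity of the extension), and you offer no argument for that; the remarks that the exponent $1/t$ is ``regularizing'' and that the contraction ``controls the approach to $\mathbb{T}$'' do not substitute for one. Moreover, even granting the continuous extension of $\omega_t$, your closing claim that $\eta_{\mu_t}=\eta_\mu\circ\omega_t$ ``inherits continuity'' is false as stated: at a point $\zeta\in\mathbb{T}$ with $|\omega_t(\zeta)|=1$ the composition requires boundary values of $\eta_\mu$, which need not exist for a general $\mu$ (no continuity hypothesis is made on $\eta_\mu$ at $\mathbb{T}$). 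Continuity of $\eta_{\mu_t}$ has to be extracted from the functional relation itself --- for instance from $\eta_{\mu_t}(z)^{t-1}=\omega_t(z)^{t}/z$, which is how (iii) can be inverted --- or as in \cite{[BB2005]}, Proposition 5.3. In short, what your proposal actually establishes is the bookkeeping surrounding the construction; the two analytic facts that constitute the theorem, univalence of $\Phi_t$ on the correct region and the boundary regularity of $\omega_t$ and $\eta_{\mu_t}$, remain assumptions.
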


\begin{rmk}\label{rmk}
\begin{enumerate}
 \item We need an additional assumption that the function
$\eta_\mu$ never vanishes on $\mathbb{D}\backslash \{0\}$ due to our
construction.
 \item We want to clarify that $\Sigma_{\mu_t}(0)=\lim_{z \to 0}z/\eta_{\mu_t}(z)$,
and the $1/t$ power in (\ref{omega}) above is chosen to be equal to $\Sigma_\mu(0)$ for $z=0$.
 \item The measures $\mu_t$ are only determined up to a rotation by a multiple of $2 \pi t$.
\end{enumerate}
\end{rmk}

By the above theorem and our discussion before, for $a \in \mathbb{T}$, we have $(\delta_a)_t=\delta_{a^t}$.
If $w=\mu \boxtimes \delta_a$, then $w_t=\mu_t \boxtimes \delta_{a^t}$ by choosing $w_t, \mu_t, a^t$ appropriately such
that $\Sigma_{w_t}=\Sigma_{\mu_t}\Sigma_{\delta_{a^t}}$.

Let $\zeta$ on the unit circle and $\alpha>1$, let
\begin{equation}\nonumber
\Gamma_{\alpha}(\zeta):=\{z\in \mathbb{D} : |z-\zeta|< \alpha(1-|z|)\}
\end{equation}
be a nontangential approach region ($Stoltz$ region). We say $z$ approaches $\zeta$ nontangentially if $z$ approaches $\zeta$
inside a nontangential approach region.
Similar to the discussion in the previous section, $1/\zeta \in \mathbb{T}$ is an atom of $\mu$ if and only if
$\eta_\mu(\zeta)=1$, where $\eta_\mu(\zeta)$ is the limit of $\eta_\mu(z)$ when $z$ approaches $\zeta$
nontangentially. And the $Julia-Carath\acute{e}odory$ derivative $\eta_\mu'(\zeta)$,
which is the limit of $(\eta_\mu(z)-\eta_\mu(\zeta))/(z-\zeta)$ when $z$ approaches $\zeta$ nontangentially,
is finite. In this case, we have
$\zeta\eta_\mu'(\zeta)=1/\mu( \{ 1/\zeta\} )$.

\begin{thm}
Let $\mu$ be a probability measure on $\mathbb{T}$ such that $\int_{\mathbb{T}} \zeta \, d\mu(\zeta) \neq 0$,
and the function $\eta_\mu$ never vanishes on $\mathbb{D}\backslash \{0\}$. Let $t>1$ be a real number.
Consider the measure $\mu_t$ constructed in (\refto{thmmulti}), and suppose that $\alpha$ and $\beta$ are atoms of $\mu_t$.
Then $\mu_t(I)>0$, where $I \subset \mathbb{T}$ is an open arc with endpoints $\alpha$ and $\beta$.
\end{thm}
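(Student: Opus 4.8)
The plan is to mirror the proof of Theorem~\ref{mainone}, transporting the half-plane computation to the disk. Arguing by contradiction, suppose $I$ is an open arc with endpoints $\alpha,\beta$ and $\mu_t(I)=0$. Set $J=\{1/\zeta:\zeta\in I\}$, the reflected arc, with endpoints $1/\alpha$ and $1/\beta$. Since $\mu_t$ puts no mass on $I$, the poles $z=1/\zeta$ of the integrand in \eqref{psiMu} are absent for $\zeta\in I$, so $\psi_{\mu_t}$ extends analytically across $J$; moreover the Herglotz form in \eqref{psiMu} gives $\Re\psi_{\mu_t}\equiv-\tfrac12$ on $J$, and writing $\psi_{\mu_t}=-\tfrac12+i\tau$ there yields $|\psi_{\mu_t}|^2=\tfrac14+\tau^2=|1+\psi_{\mu_t}|^2$, hence $|\eta_{\mu_t}|\equiv1$ on $J$, i.e. $\eta_{\mu_t}$ maps $J$ into $\mathbb{T}$. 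By the atom criterion recalled before the theorem, $\eta_{\mu_t}(1/\alpha)=\eta_{\mu_t}(1/\beta)=1$ with finite Julia--Carath\'eodory derivatives. First I would note that, by part (iv) of Theorem~\ref{eq:thmmulti}, $\eta_{\mu_t}$ extends continuously to $\overline{\mathbb{D}}$, so all of this takes place on the closed arc.

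The second step is to show that $\arg\eta_{\mu_t}$ is strictly increasing along $J$. Since $\eta_{\mu_t}:\mathbb{D}\to\mathbb{D}$ is analytic and carries the boundary arc $J$ into $\mathbb{T}$, it is orientation preserving there; concretely, conjugating by a Cayley transform $C:\mathbb{D}\to\mathbb{C}^+$ turns $\eta_{\mu_t}$ into a self-map of $\mathbb{C}^+$ that is real on an interval, and the argument in the proof of Theorem~\ref{mainone} (the image of a small half-disk would otherwise meet the lower half-plane) shows $\tfrac{d}{d\theta}\arg\eta_{\mu_t}(e^{i\theta})>0$ on the interior of $J$ after transporting back. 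Thus $\arg\eta_{\mu_t}$ increases strictly as $e^{i\theta}$ runs along $J$ from $1/\alpha$ to $1/\beta$, beginning and ending at a value in $2\pi\mathbb{Z}$ because $\eta_{\mu_t}=1$ at both endpoints; hence the net increase equals $2\pi k$ for some integer $k\ge1$.

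The main obstacle, and the genuine difference from the real line where a strictly increasing function simply cannot return to its initial value, is to exclude that $\eta_{\mu_t}$ winds around $\mathbb{T}$. If $k\ge2$, the intermediate value theorem furnishes an interior $\zeta_0\in J$ with $\arg\eta_{\mu_t}(\zeta_0)=2\pi$, so $\eta_{\mu_t}(\zeta_0)=1$; analyticity across $J$ makes the derivative finite, and strict monotonicity makes it nonzero, so the atom criterion forces $1/\zeta_0\in I$ to be an atom of $\mu_t$, contradicting $\mu_t(I)=0$. The delicate residual case is a single full turn, $k=1$, in which $\eta_{\mu_t}$ attains the value $1$ only at the two endpoints; I expect this to be the crux. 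Here I would use the univalence of $\omega_t$ (part (iv) of Theorem~\ref{eq:thmmulti}) together with $\eta_{\mu_t}=\eta_\mu\circ\omega_t$ and property (v): since $\omega_t$ is one-to-one, $\omega_t(J)$ is an arc $J'\subset\mathbb{T}$ on which $\eta_\mu$ turns once between two points where it equals $1$, and I would seek a contradiction from the angular-derivative identities at $1/\alpha$ and $1/\beta$, which link the atom masses of $\mu_t$ to those of $\mu$ through the parameter $t$. A cleaner alternative when $t>2$ is to factor $\mu_t=\mu_{t/2}\boxtimes\mu_{t/2}$ and invoke the two-measure regularity result of \cite{[BW2008]}; treating all $t>1$ uniformly through the subordination function $\omega_t$ is where the real work lies.
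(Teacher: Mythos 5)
Your setup reproduces the first half of the paper's argument faithfully: the analytic continuation of $\psi_{\mu_t}$ across the reflected arc, $|\eta_{\mu_t}|\equiv 1$ there, the strict monotonicity of $\arg\eta_{\mu_t}$ obtained by conjugating to the half plane and reusing the proof of Theorem \ref{mainone}, and the conclusion that the total increment of $\arg\eta_{\mu_t}$ along the arc is exactly $2\pi$ (the paper gets this more directly: $\mu_t(I)=0$ makes $\psi_{\mu_t}$ finite on the open arc, so $\eta_{\mu_t}\neq 1$ there and extra turns are impossible; your interior-atom argument is an acceptable substitute). But the step you defer --- the case of exactly one turn, which you yourself call ``the crux'' and ``where the real work lies'' --- is precisely the content of the paper's proof, and what you sketch for it (univalence of $\omega_t$ plus angular-derivative identities at $1/\alpha$ and $1/\beta$) is neither carried out nor what actually closes the argument. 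The paper instead computes the increment of $\arg\omega_t$ along the arc in two ways, using the explicit subordination formula of Theorem \refto{thmmulti}(iii), $\omega_t=\eta_{\mu_t}\left[z/\eta_{\mu_t}\right]^{1/t}$. Writing $\eta_{\mu_t}(e^{i\theta})=e^{ig(\theta)}$ and $\omega_t(e^{i\theta})=e^{ih(\theta)}$ with $g,h$ continuous on $[\theta_1,\theta_2]$, the formula gives, with a single integer branch constant $k$ valid along the whole arc,
\[
h(\theta_2)-h(\theta_1)=\bigl(1-\tfrac1t\bigr)\bigl(g(\theta_2)-g(\theta_1)\bigr)+\tfrac1t(\theta_2-\theta_1)=\bigl(1-\tfrac1t\bigr)2\pi+\tfrac1t(\theta_2-\theta_1),
\]
while Theorem \refto{thmmulti}(v) pins down the endpoint values $h(\theta_i)=(\theta_i+2k_i\pi)/t$, whence $h(\theta_2)-h(\theta_1)=\tfrac1t(\theta_2-\theta_1)+\tfrac{2\pi}{t}(k_2-k_1)$. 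Comparing the two expressions forces $t=1+(k_2-k_1)\in\mathbb{Z}$, contradicting $1<t<2$. Without this (or an equivalent) branch-tracking computation your proof does not conclude, and I see no reason the mass identities you allude to would substitute for it.

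A second, structural point: no winding argument of this kind can treat all $t>1$ ``uniformly,'' because its only possible output is that $t$ is an integer; for integer $t\geq 2$ there is no contradiction to extract. Hence the factorization $\mu_t=\mu_{t/2}\boxtimes\mu_{t/2}$ for $t\geq 2$, combined with the fact that such a square has at most one atom (the paper cites \cite{[B2003]}; your appeal to \cite{[BW2008]} is in the same spirit), is not a ``cleaner alternative'' but an indispensable half of the proof, and it must cover all $t\geq 2$, not just $t>2$, since the winding computation is only decisive on $1<t<2$.
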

\begin{proof}
We observe that for $t \geq2$, $\mu_t=\mu_{t/2} \boxtimes
\mu_{t/2}$, the result follows from the fact that a measure which is
of the form $\mu \boxtimes \mu$ can have at most one atom. This fact
is a direct consequence of \cite[Theorem 3.1]{[B2003]}. Therefore,
we only need to consider the case when $1 < t < 2$. Suppose there
exist two numbers $\alpha$, $\beta$ in $\mathbb{T}$ such that
$\mu_t(\{\alpha\})>0$, $\mu_t(\{\beta\})>0$ and $\mu_t(I)=0$. To
obtain a contradiction, we study the increment of argument of the
functions $\omega_t$ and $\eta_{\mu_t} \left[z/\eta_{\mu_t}
\right]^{1/t}$ when $z$ goes from $1/\alpha$ to $1/\beta$.

We denote $\overline{I} = \{ 1/{\zeta} | \zeta \in I \}$. Let us
assume $1/\alpha=e^{i\theta_1}$ and $1/\beta=e^{i\theta_2}$.
Replacing $\mu_t$ by $(\mu\boxtimes\delta_a)_t$ for some appropriate
$a$ if necessary, we may assume that $0< \theta_1 < \theta_2 < 2\pi$
and $\overline{I}=\{e^{it} : \theta_1 \leq t \leq \theta_2\}$.

Since $\mu_t(I)=0$, $\eta_{\mu_t}$ is analytic on $\overline{I}$.
Moreover, $|\eta_{\mu_t}(z)|=1$ for any $z \in \overline{I}$.
By the definition of $\eta_{\mu_t}$, one can easily check that $\eta_{\mu_t}(\mathbb{D}) \subset \mathbb{D}$ and
$\eta_{\mu_t}(\mathbb{C} \backslash \mathbb{D}) \subset \mathbb{C} \backslash \mathbb{D}$.
We claim that $\arg\eta_{\mu_t}(z)$ is monotonic in $\overline{I}$.
To see this, let us choose a conformal map $f$ from $\mathbb{D}$ to
$\mathbb{C}^+$ which transforms $\overline{I}$ to an interval $J
\subset \mathbb{R}$. Denote $\eta(z)=f \circ \eta_{\mu_t} \circ
f^{-1}$, then $\eta(\mathbb{C}^+)\subset\mathbb{C}^+$, and
$\eta(J)\subset \mathbb{R}$. By the proof of Theorem \ref{mainone},
$\eta'(z)>0$ for $z \in J$. This implies that $\arg\eta_{\mu_t}(z)$
is increasing on $\overline{I}$. More precisely, choose a continuous
function $g: [\theta_1, \theta_2] \rightarrow \mathbb{R}$ such that
$\eta_{\mu_t}(e^{i\theta})=\exp(ig(\theta)) \, (\theta_1 \leq \theta
\leq \theta_2)$. Then, $g$ is increasing. Similarly, we choose $h:
[\theta_1, \theta_2] \rightarrow \mathbb{R}$, such that
$\omega_t(e^{i\theta})=\exp(ih(\theta)) \, (\theta_1 \leq \theta
\leq \theta_2)$. Then $h$ is also increasing.

Notice that $\alpha$, $\beta$ are atoms of $\mu_t$, we have
$\eta_{\mu_t}(1/\alpha)=\eta_{\mu_t}(1/\beta)=1$.
$\mu_t(I)=0$, by the formula (\ref{psiMu}), $\psi_{\mu_t}$ is finite
on $\overline{I}$, therefore $\eta_{\mu_t}(\overline{I})\subseteq \mathbb{T}\backslash\{1\}$.
It implies that
the increment of argument of $\eta_{\mu_t}$ is $2\pi$
when $z$ goes from $1/\alpha$ to $1/\beta$.

By Theorem \refto{thmmulti}(\rmnum{5}), $\omega_t(1/\alpha)$(resp.
$\omega_t(1/\beta)$) is a ($1/t$)-power of $1/\alpha$
(resp.$1/\beta$),
there exist two integers $k_i (i=1, 2)$ such that $h(\theta_i)=1/t(\theta_i+2k_i \pi) (i=1, 2)$.
We obtain that
\begin{equation}\label{equa1}
h(\theta_2)-h(\theta_1)=1/t(\theta_2-\theta_1)+1/t(2(k_2-k_1)\pi).
\end{equation}
Also, there
exists an integer $k$ such that
\begin{equation}\label{overt}
\biggl(\frac{e^{i\theta}}{\eta_{\mu_t}(e^{i\theta})}
\biggr)^{1/t}=\exp\big(1/t(i\theta-g(\theta)+2k\pi)\big). \nonumber
\end{equation}
By the choices of $g$ and $h$, the above equation and $\omega_t=\eta_{\mu_t} \left[z/\eta_{\mu_t}
\right]^{1/t}$, we have
\begin{align}\label{equa2}
h(\theta_2)-h(\theta_1)&=(g(\theta_2)-g(\theta_1))+1/t[(\theta_2-\theta_1)-(g(\theta_2)-g(\theta_1))]\nonumber \\
     &=(1-1/t)(g(\theta_2)-g(\theta_1))+1/t(\theta_2-\theta_1) \nonumber \\
            &=(1-1/t)(2\pi)+1/t(\theta_2-\theta_1).
\end{align}
We compare the equation (\ref{equa1}) with the equation (\ref{equa2}),
and deduce that $t$ must be an integer. However, this is not true, since $1<t<2$.
\end{proof}

\section*{Acknowledgments}
The author thanks his advisor, Professor Hari Bercovici, for his generosity, encouragement and invaluable discussion during the course of the investigation.
He also thanks a referee for useful comments.

\end{document}